\newtheorem{theorem}{Theorem}
\newtheorem{propos}{Proposition}
\newtheorem{claim}{Claim}
\theoremstyle{definition} \newtheorem{example}{Example}
\newtheorem{remark}{Remark}
\newtheorem{convention}{Convention}
\theoremstyle{remark}
\numberwithin{equation}{section}
\def\Q{{{\mathbb Q}}}
\def\P{{{\mathbb P }}}
\def\N{{{\mathbb N }}}
\def\FF{{{\mathcal F}}}
\def\LL{{{\mathcal L}}}
\def\OO{{{\mathcal O}}}
\def\Tor{{{\rm Tor }}}
\def\Spec{{{\rm Spec \,}}}
\def\Proj{{{\rm Proj \,}}}
\def\Supp{{{\rm Supp \,}}}
\def\dim{{{\rm dim \,}}}
\def\ker{{{\rm ker \,}}}
\def\length{{{\rm length \,}}}
\begin{document}
{\sl MSC 14A15, 14A05, 14B10, 14B25, 16D40}

 {\sl UDC 512.7}
\medskip

\begin{center}
{\Large\sc Infinitesimal Criterion for Flatness of Projective
Morphism of Schemes}\end{center}
\medskip
\begin{center}
Nadezda V. TIMOFEEVA

\smallskip

Yaroslavl' State University

Sovetskaya str. 14, 150000 Yaroslavl', Russia

e-mail: {\it ntimofeeva@list.ru}
\end{center}
\bigskip

\begin{quote}The generalization of the well-known criterion
for flatness of a projective morphism of Noetherian schemes
involving Hilbert polynomial, is given for the case of nonreduced
base of the morphism.

Bibliography: 4 titles.

{\it Keywords:} Noetherian algebraic schemes, projective morphism,
nonreduced scheme structure, flat morphism, coherent sheaf of
modules.
\end{quote}

\footnotetext{The work was partially supported by the Institute of
Mathematics "Simion Stoilow"\, of Romanian Academy (IMAR)
(partnership IMAR -- BITDEFENDER) during of author's stay as
invited professor, June -- July 2011.}

\section*{Introduction} We start with some classical notation.
Let $\P^N_T$ be relative projective space of dimension $N$ over a
scheme $T$, $\OO(1)$ be a  line bundle on $\P^N_T$ generated by
hyperplane section. It is very ample relative to $T$. Also if
$f:X\to T$ is a morphism of schemes and $t\in T$ a closed point
with residue field $k(t)$ then $X_t:=f^{-1}(t)$ is a closed fibre
of $f$ over $t$.

The purpose of the present note is to generalize the following
well-known criterion for flatness of a projective morphism of
Noetherian schemes \cite[ch. III, theorem 9.9]{Hart}:
\begin{theorem}\label{classic} Let $T$ be an integral Noetherian scheme
and $X \subset \P_T^N$ be some closed subscheme. For each closed
point $t\in T$ take Hilbert polynomial $P_t \in \Q[m]$ of the
fibre $X_t$. This fibre is considered as closed subscheme in
$\P^N_t$. Then the subscheme  $X$ is flat over $T$ if and only if
Hilbert polynomial $P_t$ does not depend on the choice of $t$.
\end{theorem}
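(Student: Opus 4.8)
The plan is to reduce the statement to a property of the coherent sheaf $\FF:=\OO_X$ and its direct images under the projection, and ultimately to a purely algebraic criterion for local freeness of a finitely generated module over a Noetherian domain. Since flatness is local on the target, I would first cover $T$ by affine opens and assume $T=\Spec A$ with $A$ a Noetherian integral domain; let $\pi:\P^N_A\to\Spec A$ be the structure morphism. Flatness of $X$ over $T$ then means exactly that $\FF=\OO_X$ is flat over $A$ at each of its points, so the whole theorem becomes an assertion about $\FF$.

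The main analytic input is Serre's vanishing theorem together with the theorem on cohomology and base change. They provide a single $m_0$, uniform over the Noetherian base, such that for every $m\ge m_0$ the higher direct images $R^i\pi_*\FF(m)$ vanish for $i\ge 1$, the module $\pi_*\FF(m)$ is finitely generated over $A$, and for every closed point $t$ the base change homomorphism
\[
\pi_*\FF(m)\otimes_A k(t)\;\xrightarrow{\ \sim\ }\;H^0\!\left(X_t,\OO_{X_t}(m)\right)
\]
is an isomorphism. Because $P_t(m)=\dim_{k(t)}H^0(X_t,\OO_{X_t}(m))$ for $m\gg 0$, this identifies $P_t(m)$ with $\dim_{k(t)}\bigl(\pi_*\FF(m)\otimes_A k(t)\bigr)$ for all $m\ge m_0$. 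Hence ``$P_t$ is independent of $t$'' is equivalent to ``for every $m\ge m_0$ the fibre dimension $\dim_{k(t)}\bigl(\pi_*\FF(m)\otimes_A k(t)\bigr)$ is the same for all closed points $t$''.

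I would then invoke the algebraic criterion: for a finitely generated module $M$ over a Noetherian integral domain $A$ the function $t\mapsto\dim_{k(t)}(M\otimes_A k(t))$ is upper semicontinuous, and if it is constant on closed points then (comparing with the generic rank via generic freeness and semicontinuity) $M$ is locally free. Applied to each $M=\pi_*\FF(m)$ this rewrites the condition above as: $\pi_*\FF(m)$ is locally free over $A$ for every $m\ge m_0$. It therefore remains to establish
\[
\FF\ \text{flat over}\ A\ \Longleftrightarrow\ \pi_*\FF(m)\ \text{locally free over}\ A\ \text{for all}\ m\gg 0 .
\]
The implication ($\Rightarrow$) is once more cohomology and base change: flatness makes $\pi_*\FF(m)$ flat and finitely generated, hence locally free.

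The implication ($\Leftarrow$) is where I expect the real difficulty. Here one reconstructs $\FF$ as the sheaf $\widetilde{M}$ associated with the graded module $M=\bigoplus_{m\ge m_0}\pi_*\FF(m)$ over $S=A[x_0,\dots,x_N]$, and must deduce flatness of $\FF$, which is a statement about its stalks and hence about homogeneous localizations of $M$, from flatness of the individual graded pieces $M_m$. Passing from ``every graded component $M_m$ is flat'' to ``the localizations $M_{(\mathfrak p)}$ are flat over $A$'' is the genuinely algebraic heart of the argument and the step I would carry out most carefully; once it is in place, the chain of equivalences assembled above proves the theorem.
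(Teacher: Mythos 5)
Your overall route is the same one the paper takes for this statement (it cites Hartshorne, ch.~III, theorem 9.9, and reproduces the relevant parts of that proof in its last section): localize to $T=\Spec A$, identify the values $P_t(m)$ for $m\gg 0$ with the fibre dimensions of the $A$-modules $H^0(X,\FF(m))$, use integrality of the base to turn constancy of these dimensions into local freeness, and close the loop via ``$\FF$ flat $\Leftrightarrow$ $\pi_*\FF(m)$ locally free for $m\gg 0$''. But as a proof your proposal has a genuine gap that you flag yourself and then leave open: the implication ``$\pi_*\FF(m)$ locally free for all $m\ge m_0$ $\Rightarrow$ $\FF$ flat'' is never argued; you only announce it as ``the genuinely algebraic heart''. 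A proposal whose hardest step is a placeholder is not a proof. The step is in fact short, and you should supply it: putting $M=\bigoplus_{m\ge m_0}H^0(X,\FF(m))$, one has $\FF\cong\widetilde M$; the stalk of $\widetilde M$ at a homogeneous prime ${\mathfrak p}\subset A[x_0,\dots,x_N]$ is the degree-zero piece $M_{({\mathfrak p})}$ of the localization $M_{\mathfrak p}$; the localization $M_{\mathfrak p}$ is a filtered colimit of copies of the flat $A$-module $M$, hence $A$-flat, and $M_{({\mathfrak p})}$ is an $A$-module direct summand of the graded module $M_{\mathfrak p}$, hence $A$-flat as well. Without some such argument your chain of equivalences does not close.

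Two further steps would fail as written. First, the uniform isomorphism $\pi_*\FF(m)\otimes_A k(t)\cong H^0(X_t,\OO_{X_t}(m))$ cannot be quoted from ``the theorem on cohomology and base change'': that theorem presupposes that $\FF$ is flat over the base, which is exactly what is being proved, so the appeal is circular in the direction you need it (constancy of $P_t$ $\Rightarrow$ flatness). The correct tool is the elementary comparison that the paper itself carries out to obtain its isomorphism (\ref{sect}): tensor a presentation $A^q\to A\to k(t)\to 0$ with $H^0(X,\FF(m))$, apply $H^0$ to the corresponding sequence of sheaves, and use Serre vanishing to see both are right-exact for $m\gg 0$; this needs no flatness, and the threshold may depend on $t$, which suffices because (as in Hartshorne) each point is compared with the generic point rather than with other closed points. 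Second, the algebraic criterion you invoke --- ``fibre dimension constant at closed points implies locally free'' over a Noetherian domain --- is false in that generality: over a discrete valuation ring $R$ with uniformizer $\pi$, the module $R/(\pi)$ has constant fibre dimension at the unique closed point yet is not free. You need either to quantify over all points $t\in T$, including the generic one (which is how Hartshorne states the theorem), or to assume closed points are dense in $T$, e.g.\ $T$ of finite type over a field, which is the standing hypothesis in the rest of the paper.
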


This theorem is applicable to any projective morphism of schemes
$f:X\to T$ with integral base scheme $T$, if one reformulate it as
follows:
\smallskip

{\it Let projective morphism of Noetherian schemes  $f:X\to T$
with integral scheme  $T$ fits into the commutative diagram
\begin{equation}\label{triangle}
\xymatrix{X \ar@{^(->}[r]^i \ar[rd]_f& \P^N_T \ar[d]\\
&T}
\end{equation}
with $i$ being closed immersion. It is flat if and only if  for an
invertible  $\OO_X$-sheaf $\LL$ which is very ample relative to
$T$ and such that  $\LL=i^{\ast} \OO(1),$ for every closed point
 $t\in T$ Hilbert polynomial of the fibre
$P_t(m)= \chi(\LL^m|_{X_t})$ does not depend on the choice of
$t\in T$.}
\smallskip

The proof of this theorem as presented in \cite{Hart} allows to
deduce flatness of any coherent $\OO_X$-sheaf $\FF$ over an
integral scheme $T$, if Hilbert polynomial $\chi (\FF \otimes
\LL^m|_{X_t})$ of its restriction on the fibre $X_t$ over each
point  $t\in T$ does not depend on the choice of $t$.

Cited criterion is not applicable in the case when the scheme $T$
has nonreduced scheme structure (\S \ref{example}). As we will
show below (\S \ref{proof}), this inconvenience can be removed if
Hilbert polynomial is replaced by some other function. In closed
points this function coincides with Hilbert polynomial of fibres.
We need some notation. If $t\in T$ is a closed point of the scheme
$T$ and this point corresponds to a sheaf of maximal ideals
${\mathfrak m}_t\subset \OO_X$, then the symbol $t^{(n)}$ stands
for $n$th infinitesimal neighborhood of the point $t\in T$. The
$n$th infinitesimal neighborhood is a subscheme defined by the
sheaf of ideals ${\mathfrak m}_t^{n+1}$ in $T$. In our
consideration $T$ is supposed to be Noetherian scheme of finite
type over a field, then for each $n\in \N$ the subscheme $t^{(n)}$
is zero-dimensional subscheme of finite length equal to
 $\length t^{(n)}=
\chi (\OO_{t^{(n)}})$. It is clear that this is positive integer
depending on both $t$ and $n$. If the point  $t=\Supp t^{(n)}$ is
known and fixed, we denote the length of $n$th infinitesimal
neighborhood $t^{(n)}$ by the symbol  $(n+1)$ (according to the
power of maximal ideal corresponding to the subscheme $t^{(n)}$).

We operate in the category of Noetherian schemes over a field $k$.
This field is supposed to be algebraically closed. The hypothesis
of algebraic closedness of the base field is essential in those
part of argument where we use filtrations (and cofiltrations) of
Artinian algebras. Namely, these are  proofs of claim \ref{claim1}
and of proposition \ref{propinf}. If $A/I$ is Artinian algebra
over an algebraically closed field, then $\length A/I =\dim_k
A/I$. Since all vector spaces appearing in this paper are defined
over the field $k$, the lower index in the notation of dimension
is omitted.

 To deduce that the morphism $f$ is flat, one has to examine preimages
$f^{-1}(t^{(n)}):=X\times _T t^{(n)}$ of infinitesimal
neighborhoods of reduced points $t\in T$. This provides data on
behavior of the morphism  $f$ over nonreduced scheme structure of
$T$. Since $T$ is of finite type, the power $n$ to be examined for
the given morphism  $f$, is bounded from above (and not greater
then maximal index of nilpotent elements in $\OO_T$ minus 1). This
paper is devoted to the proof of following results (theorem
\ref{critf} is a particular case of theorem \ref{critF}, and we
prove theorem \ref{critF} immediately).
\begin{theorem}\label{critf} Let a projective morphism of Noetherian schemes
of finite type $f: X \to T$ fits into commutative diagram
(\ref{triangle}). It is flat if and only if for an invertible
$\OO_X$-sheaf $\LL$ very ample relatively $T$ and such that
$\LL=i^{\ast} \OO(1)$, for any closed point  $t\in T$ the function
$$
\varpi_t^{(n)}(\OO_X,m)=\frac{\chi
(\LL^m|_{f^{-1}(t^{(n)})})}{\chi(\OO_{t^{(n)}})}
$$
does not depend on the choice of $t\in T$ and of $n\in \N$.
\end{theorem}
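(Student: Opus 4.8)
The plan is to prove both implications by localizing at a closed point $t\in T$, reformulating flatness through the Artinian local rings $A_n:=\OO_{T,t}/\mathfrak m_t^{n+1}$ by means of a local criterion of flatness, and then comparing the length of the relevant Artinian modules with the Euler characteristic occurring in $\varpi_t^{(n)}$. First I would reduce to a local statement: since $f$ is of finite type over the algebraically closed field $k$, the scheme $X$ is Jacobson and the non-flat locus of $f$ is closed in $X$, so $f$ is flat if and only if it is flat at every closed point, that is, if and only if for every closed $t\in T$ the sheaf $\OO_X$ is flat over $A:=\OO_{T,t}$ along the fibre $X_t$. Fix such a $t$ and write $\mathfrak m=\mathfrak m_t$, $k=A/\mathfrak m$, $A_n=A/\mathfrak m^{n+1}$, so that $t^{(n)}=\Spec A_n$ and $\ell_n:=\length A_n=\chi(\OO_{t^{(n)}})=\dim A_n$; here algebraic closedness of $k$ gives $\length_{A_n}N=\dim N$ for every finite $A_n$-module $N$.

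For the implication ``$f$ flat $\Rightarrow$ $\varpi_t^{(n)}$ constant'', assume $f$ is flat, so that $\OO_{f^{-1}(t^{(n)})}$ is flat over $A_n$. A composition series $0=I_0\subset\cdots\subset I_{\ell_n}=A_n$ with $I_j/I_{j-1}\cong k$ induces, after tensoring with the flat sheaf $\OO_{f^{-1}(t^{(n)})}\otimes\LL^m$, a filtration whose successive quotients are all isomorphic to $\LL^m|_{X_t}$; additivity of $\chi$ then gives $\chi(\LL^m|_{f^{-1}(t^{(n)})})=\ell_n\,\chi(\LL^m|_{X_t})=\ell_n P_t(m)$, where $P_t(m):=\chi(\LL^m|_{X_t})$ is the Hilbert polynomial of the closed fibre. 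Thus $\varpi_t^{(n)}(\OO_X,m)=P_t(m)$ for every $n$, in particular independent of $n$. Independence of $t$ is the content of Theorem \ref{classic}: the Hilbert polynomial of the fibres of a flat projective morphism is locally constant, hence constant when the base is connected.

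For the converse I would invoke the local (infinitesimal) criterion of flatness, which asserts that $\OO_X$ is $A$-flat along $X_t$ if and only if the restriction $\OO_{f^{-1}(t^{(n)})}$ is $A_n$-flat for every $n\in\N$; the delicate passage here is from freeness over all the thickenings $A_n$ back to flatness over $A$, which rests on the Artin--Rees lemma. So fix $n$ and set $Z_n:=f^{-1}(t^{(n)})$. Since $Z_n\to\Spec A_n$ is projective, $\OO_{Z_n}$ is $A_n$-flat if and only if $N_m:=H^0(Z_n,\LL^m|_{Z_n})$ is a free $A_n$-module for all $m\gg 0$. For any finite $A_n$-module $N$ one has $\length N\le \dim(N\otimes_{A_n}k)\cdot\ell_n$, with equality exactly when $N$ is free. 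Applying this to $N=N_m$, Serre vanishing on $Z_n$ and on $Z_0=X_t$ gives, for $m\gg 0$, both $\length N_m=\dim N_m=\chi(\LL^m|_{Z_n})$ and $\dim(N_m\otimes_{A_n}k)=\dim H^0(X_t,\LL^m|_{X_t})=P_t(m)$, whence
\begin{equation*}
\varpi_t^{(n)}(\OO_X,m)=\frac{\chi(\LL^m|_{Z_n})}{\ell_n}\le P_t(m)=\varpi_t^{(0)}(\OO_X,m)
\end{equation*}
for $m\gg 0$, with equality precisely when $N_m$ is $A_n$-free. The hypothesis that $\varpi_t^{(n)}$ is independent of $n$, hence equal to $\varpi_t^{(0)}=P_t$, forces equality for all $m\gg 0$; therefore every $N_m$ is free, $\OO_{Z_n}$ is $A_n$-flat, and, $n$ being arbitrary, the infinitesimal criterion yields the $A$-flatness of $\OO_X$, i.e.\ the flatness of $f$.

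The main difficulty is concentrated in the infinitesimal criterion used in the converse: deducing genuine $A$-flatness from the $A_n$-freeness of the restrictions for all $n$ is where algebraic closedness of $k$ and the filtration and length bookkeeping over Artinian algebras are indispensable, and it is presumably isolated as a separate proposition. By contrast the per-$n$ analysis is clean, since an equality of Hilbert polynomials is detected by the values for $m\gg 0$ and no bound uniform in $n$ is needed there; all comparison between different $n$ is absorbed into the criterion. A secondary point to handle carefully is the closed-fibre base change $N_m\otimes_{A_n}k\cong H^0(X_t,\LL^m|_{X_t})$, valid only for large $m$, which is exactly what pins the ``rank'' of $N_m$ to the Hilbert polynomial $P_t$.
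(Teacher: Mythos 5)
Your proposal is correct, and it takes a genuinely different route from the paper's. The paper never argues thickening-by-thickening: it works with the single $A$-module $H^0(X,\FF(m))$ for $m\gg 0$ (here $\FF=\OO_X$, $A=\OO_{T,t}$) and proves for it a purely module-theoretic freeness criterion, Proposition \ref{prcrmod} (free $\Leftrightarrow$ $\dim (M\otimes_A A/I_n)=n\dim (M\otimes_A k)$ for \emph{all} finite-colength ideals $I_n$), together with Proposition \ref{propinf} (it suffices to test the powers ${\mathfrak m}^n$); the proofs run through Tor-vanishing, filtration and cofiltration bookkeeping in Artinian algebras, and passage to the ${\mathfrak m}$-adic completion via \cite[theorem 22.4(ii)]{Mats}. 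The dictionary between flatness of the sheaf and freeness of $H^0(X,\FF(m))$, and the base-change isomorphism (\ref{sect}), are imported from \cite[ch. III, theorem 9.9]{Hart}, exactly as you import them over each Artinian ring $A_n$. Your decomposition instead isolates, for each fixed $n$, the elementary Artinian fact that $\length N\le \ell_n\,\dim (N\otimes_{A_n}k)$ with equality iff $N$ is free --- this single observation replaces Propositions \ref{prcrmod} and \ref{propinf} --- and then delegates the passage from ``$\OO_{Z_n}$ flat over $A_n$ for all $n$'' to ``$f$ flat at $t$'' to the standard local criterion of flatness (\cite[theorem 22.3]{Mats}), applied to the stalks $\OO_{X,x}$, $x\in X_t$, which are ideally separated $A$-modules, so the criterion does apply. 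Thus both arguments ultimately lean on the same completion/Artin--Rees input from Matsumura, but you quote the gluing-over-$n$ step as a black box, whereas the paper in effect re-derives exactly the instance of it that it needs; in exchange, your per-$n$ analysis and your forward implication (composition series of $A_n$ plus additivity of $\chi$, instead of the paper's \v{C}ech-resolution argument) are shorter and more transparent, and your argument extends verbatim from $\OO_X$ to a coherent $\FF$, recovering Theorem \ref{critF}. Two points to tighten in a full write-up: the closed-fibre base change $N_m\otimes_{A_n}k\cong H^0(X_t,\LL^m|_{X_t})$ for $m\gg 0$, which you only flag, is precisely the paper's isomorphism (\ref{sect}) and needs its short presentation argument; and your opening reduction does not actually require openness of the flat locus, since flatness at all closed points of $X$ propagates to arbitrary points by localization along specializations.
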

\begin{theorem}\label{critF} Let a projective morphism of Noetherian schemes
of finite type $f: X \to T$ fits into commutative diagram
(\ref{triangle}). The coherent sheaf of $\OO_X$-modules $\FF$ is
flat with respect to  $f$ (i.e. flat as $\OO_T$-module) if and
only if for an invertible $\OO_X$-sheaf $\LL$ very ample
relatively $T$ and such that $\LL= i^{\ast} \OO(1),$ for any
closed point $t\in T$ the function
$$
\varpi_t^{(n)}(\FF, m)=\frac{\chi (\FF \otimes
\LL^m|_{f^{-1}(t^{(n)})})}{\chi(\OO_{t^{(n)}})}
$$
does not depend on the choice of  $t\in T$ and of $n\in \N$.
\end{theorem}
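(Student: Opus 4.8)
The plan is to pin the problem to a single, local freeness statement for the pushforwards $f_\ast(\FF\otimes\LL^m)$ and then invoke the mechanism behind Theorem~\ref{classic}. Since flatness of $\FF$ over $T$ is local on the base, I would fix a closed point $t\in T$ and write $A=\OO_{T,t}$, $\mathfrak m=\mathfrak m_t$, and $A_n=A/\mathfrak m^{n+1}=\OO_{t^{(n)}}$, an Artinian local $k$-algebra with $\length A_n=\chi(\OO_{t^{(n)}})$. For $m\gg0$ set $M_m=f_\ast(\FF\otimes\LL^m)_t$, a finitely generated $A$-module. I would organize the proof around two assertions: first, that for $m\gg0$ one has, \emph{uniformly in} $n$,
\[
\chi(\FF\otimes\LL^m|_{f^{-1}(t^{(n)})})=\length_A(M_m\otimes_A A_n);
\]
and second, the local \emph{infinitesimal criterion}: a finitely generated $A$-module $M$ is free if and only if $\length_A(M\otimes_A A_n)=\dim_k(M\otimes_A k)\cdot\length_A A_n$ for every $n\in\N$. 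Granting both, the hypothesis that $\varpi^{(n)}_t(\FF,m)$ is independent of $n$ is exactly the equality $\length_A(M_m\otimes_A A_n)=P(m)\cdot\length_A A_n$ with $P(m)=\varpi^{(0)}_t(\FF,m)=\dim_k(M_m\otimes_A k)$; the criterion then forces every $M_m$ ($m\gg0$) to be free, so $f_\ast(\FF\otimes\LL^m)$ is locally free for all large $m$, and the argument of \cite[ch.~III, Theorem 9.9]{Hart} upgrades this to flatness of $\FF$ over $T$.

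For the first assertion I would pass through the closed immersion $i$, writing $i_\ast\FF=\widetilde M$ for a finitely generated graded $A[x_0,\dots,x_N]$-module $M$ with $M_m=f_\ast(\FF\otimes\LL^m)_t$ for $m\gg0$; since $i$ is affine, restriction to $f^{-1}(t^{(n)})$ commutes with $i_\ast$, so the Euler characteristic in question is that of $\widetilde{M\otimes_A A_n}(m)$ on $\P^N_{A_n}$. The delicate point is to make a \emph{single} degree bound $m_0$ serve all $n$ at once, and this is where I expect the main obstacle to lie, since $\FF$ is not assumed flat and ordinary cohomology and base change is unavailable. My device would be to apply Serre vanishing not to $\FF$ but to the associated graded sheaf $\operatorname{gr}_{\mathfrak m}\FF=\bigoplus_{j\ge0}\mathfrak m^j\FF/\mathfrak m^{j+1}\FF$, viewed as one coherent sheaf on $\P^N_k\times_k\Spec\operatorname{gr}_{\mathfrak m}A$ (a scheme of finite type over $k$). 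A uniform vanishing $H^{>0}(\P^N_k,(\mathfrak m^j\FF/\mathfrak m^{j+1}\FF)\otimes\LL^m)=0$ for $m\ge m_0$, valid for all $j$ simultaneously, then propagates through the finite filtration of $\FF\otimes_A A_n$ by the $\mathfrak m^j\FF/\mathfrak m^{j+1}\FF$, $0\le j\le n$, giving both the vanishing of higher cohomology of $(\FF\otimes_A A_n)\otimes\LL^m$ and the base-change identity $H^0=M_m\otimes_A A_n$ for every $n$ and every $m\ge m_0$. This yields the displayed length formula.

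The second assertion is the place where algebraic closedness of $k$ is essential. Because $A_n$ is Artinian local with residue field $k=\bar k$, it admits a composition series $0=J_{L}\subset\cdots\subset J_1\subset J_0=A_n$ of ideals with $J_j/J_{j+1}\cong k$, where $L=\length A_n$. Applying the right-exact functor $M\otimes_A-$ to $0\to J_{j+1}\to J_j\to k\to0$ and summing gives $\length_A(M\otimes_A A_n)\le L\cdot\dim_k(M\otimes_A k)$, and equality for a single $n$ forces every map $M\otimes_A J_{j+1}\to M\otimes_A J_j$ to be injective, i.e. $\Tor_1^A(M,k)=0$; by the local criterion for flatness the finitely generated module $M$ is then flat, hence free. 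Equivalently, a minimal surjection $A^{r}\to M$ with $r=\dim_k(M\otimes_A k)$ has kernel $K$ satisfying $K\subseteq\mathfrak m^{n+1}A^{r}$ for all $n$, whence $K=0$ by Krull's intersection theorem. This completes the ``if'' direction.

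Finally, the ``only if'' direction is the easy one: if $\FF$ is flat over $T$, cohomology and base change makes $f_\ast(\FF\otimes\LL^m)$ locally free for $m\gg0$, of rank the fibre Hilbert polynomial $P(m)$; the length formula then reads $\chi(\FF\otimes\LL^m|_{f^{-1}(t^{(n)})})=P(m)\cdot\length_A A_n$, so $\varpi^{(n)}_t(\FF,m)=P(m)$ is manifestly independent of $n$, while its independence of $t$ is the classical fact that a flat family over a connected base has constant fibrewise Hilbert polynomial. As $\chi(\FF\otimes\LL^m|_{f^{-1}(t^{(n)})})$ is a numerical polynomial in $m$, agreement with $P(m)$ for $m\gg0$ extends to all $m$.
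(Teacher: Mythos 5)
Your proposal follows the paper's architecture: localize at a closed point, reduce the theorem to a numerical freeness criterion for the finitely generated $A$-modules $M_m=H^0(X,\FF\otimes\LL^m)$, and transfer freeness of these modules to flatness of $\FF$ via Hartshorne's graded-module argument. But each of your two building blocks has a problem. In assertion (b), the step ``equality for a single $n$ forces every map $M\otimes_A J_{j+1}\to M\otimes_A J_j$ to be injective, i.e.\ $\Tor_1^A(M,k)=0$'' is wrong at the ``i.e.'': injectivity of those maps only gives $\Tor_1^{A_n}(M\otimes_A A_n,k)=0$, that is, freeness of $M\otimes_A A_n$ over the Artinian quotient $A_n$, not vanishing of $\Tor_1$ over $A$ itself. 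The paper's own Example 2 refutes your claim: for $A=k[x]/(x^3)$, $M=k[x]/(x^2)$, $n=1$ one has $\length (M\otimes_A A_1)=2=\length A_1\cdot \dim (M\otimes_A k)$, yet $\Tor_1^A(M,k)\cong k\neq 0$ and $M$ is not flat. Your ``equivalent'' reformulation via a minimal presentation $A^r\to M$ and Krull's intersection theorem is correct, but it is not equivalent: it genuinely uses the equality for \emph{all} $n$, as any correct proof must. Since the hypothesis supplies all $n$, assertion (b) survives on the Krull argument alone (and that argument is in fact shorter than the paper's route through flatness of every $M\otimes A/I_n$ and Matsumura's local criterion via completions); still, the flawed sentence should be deleted rather than kept as an alternative.

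The genuine gap is in assertion (a). Uniform Serre vanishing for the graded pieces $\mathfrak m^j\FF/\mathfrak m^{j+1}\FF$, propagated through the filtration of $\FF\otimes_A A_n$, does give $H^{>0}(X,(\FF\otimes_A A_n)\otimes\LL^m)=0$ for all $n$ and all $m\ge m_0$, hence $\chi=h^0$ uniformly; but it does \emph{not} give the base-change identity $H^0(X,(\FF\otimes_A A_n)\otimes\LL^m)\cong M_m\otimes_A A_n$. The obstruction lives in the sheaf $\mathfrak m^{n+1}\FF$, which is not a subquotient of $\FF\otimes_A A_n$ and is therefore invisible to your filtration: surjectivity of $M_m\to H^0(X,(\FF\otimes_A A_n)\otimes\LL^m)$ needs $H^1(X,\mathfrak m^{n+1}\FF\otimes\LL^m)=0$ uniformly in $n$, and injectivity of the induced map from $M_m\otimes_A A_n$ needs $H^0(X,\mathfrak m^{n+1}\FF\otimes\LL^m)=\mathfrak m^{n+1}M_m$, an Artin--Rees-type equality; a priori the filtration of $M_m$ by the submodules $H^0(X,\mathfrak m^{j}\FF\otimes\LL^m)$ agrees with the $\mathfrak m$-adic one only up to a shift. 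Closing this requires working with the Rees module $\bigoplus_j\mathfrak m^j\FF$ (coherent over the Rees algebra $\bigoplus_j\mathfrak m^j$) together with a ``generated in degree zero'' argument for $\bigoplus_j H^0(X,\mathfrak m^j\FF\otimes\LL^m)$ --- essentially the comparison-theorem machinery of EGA III --- not merely with the associated graded sheaf. For perspective: the paper identifies $H^0$ with $M_m\otimes_A A/I_n$ by a different device (comparing global sections of a pulled-back finite presentation of $A/I_n$ with the tensored presentation), valid for $m\gg0$ with a bound depending on $I_n$, and then quantifies over all ideals at a fixed $m$ without addressing uniformity at all; so you have correctly isolated a delicate point that the paper itself glosses over, but your proposed mechanism does not yet close it.
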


In case when  $f$ is finite morphism the function in theorem
\ref{critf} takes the form
$$ \varpi_t^{(n)}(\OO_X,m)=\frac{\length
(f^{-1}(t^{(n)}))}{\length (t^{(n)})}.
$$

If the scheme $T$ is reduced, it is enough to examine only the
case  $n=0$. This corresponds to the classical situation
$\varpi_t^{(0)}(\OO_X,m)=P_t(m)$ (theorem \ref{classic}).

\section{Motivation}\label{example}
\begin{example} Consider nonreduced scheme $T=\Spec k[x]/(x^2)$ of length 2
 and a morphism  $f: X\to T$ if immersion of
(unique) closed point $X=\Spec k$. Since both schemes are
supported at a point, we replace examining of a morphism of
structure sheaves  $f^{\sharp}: \OO_T \to f_{\ast} \OO_X$ by the
study of a homomorphism $f^{\sharp}: k[x]/(x^2) \to k$ of
corresponding Artinian algebras. It is clear that $f^{\sharp}$ is
an epimorphism onto the quotient ring over nilradical:
$f^{\sharp}: k[x]/(x^2) \to (k[x]/(x^2))/Nil=k$. We use the
criterion for flatness of a ring homomorphism formulated in
\cite[ch. 1, proposition 2.1]{Milne}
\begin{propos} A homomorphism  $f: A\to
B$ is flat if and only if mappings $(a\otimes b \mapsto f(a)b): I
\otimes_A B \to B$ are injective for all ideals $I$ of $A$.
\end{propos}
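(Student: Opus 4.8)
The plan is to read this as the standard ideal-wise (``equational'') criterion for flatness of the $A$-module $B$, where flatness means that $-\otimes_A B$ is an exact functor. First I would dispose of the easy implication: if $B$ is flat over $A$, then applying $-\otimes_A B$ to the inclusion $I\hookrightarrow A$ preserves injectivity, and the resulting monomorphism $I\otimes_A B\to A\otimes_A B\cong B$ is precisely the map $a\otimes b\mapsto f(a)b$. Hence this map is injective for every ideal $I$, which is the forward direction.

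For the converse the key idea is to translate the stated injectivity into the vanishing of a $\Tor$ group. I would tensor the short exact sequence $0\to I\to A\to A/I\to 0$ with $B$ and invoke the long exact sequence of $\Tor^A(-,B)$. Since $A$ is free over itself, $\Tor_1^A(A,B)=0$, and the tail of the sequence reads $0\to\Tor_1^A(A/I,B)\to I\otimes_A B\to A\otimes_A B\cong B$. Therefore $\ker(I\otimes_A B\to B)\cong\Tor_1^A(A/I,B)$, and the hypothesis of the proposition is exactly the assertion that $\Tor_1^A(A/I,B)=0$ for every ideal $I$.

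It then remains to bootstrap from cyclic quotients $A/I$ to arbitrary modules, which is the technical heart. I would first reduce to finitely generated ideals, using that every ideal is the filtered union of its finitely generated subideals, that tensor product commutes with direct limits, and that filtered colimits are exact, so the kernel $\Tor_1^A(A/I,B)$ is the colimit of those for finitely generated $I$. Next, by induction on the number of generators of a finitely generated module $M$, via a short exact sequence $0\to M'\to M\to A/J\to 0$ with $M'$ generated by fewer elements and the associated long exact $\Tor$ sequence, I would deduce $\Tor_1^A(M,B)=0$ for all finitely generated $M$. Finally, writing an arbitrary $M$ as the direct limit of its finitely generated submodules and again using the compatibility of $\Tor$ with $\varinjlim$ gives $\Tor_1^A(M,B)=0$ for every $M$, which is equivalent to flatness of $B$.

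I expect the main obstacle to be this d\'evissage and direct-limit step: one must justify carefully that the vanishing of $\Tor_1$ on cyclic modules propagates first to all finitely generated modules and then, through the colimit identifications, to all modules. The identification of the kernel with $\Tor_1$ also presupposes the derived-functor formalism; an elementary route avoiding $\Tor$ would instead resolve directly the relations witnessing an element of $\ker(N'\otimes_A B\to N\otimes_A B)$ for an inclusion $N'\hookrightarrow N$, but this merely repackages the same combinatorial content.
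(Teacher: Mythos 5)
Your argument is correct in every step, but there is nothing in the paper to compare it with: the paper does not prove this proposition at all. It is quoted as a known criterion from Milne's \'{E}tale Cohomology (ch.~1, proposition 2.1) and used purely as a black box in the motivating example, where it is applied to $k[x]/(x^2)\to k$ to exhibit a non-flat morphism. Your proof is the standard one: the forward direction by exactness of $-\otimes_A B$ applied to $I\hookrightarrow A$; the identification $\ker(I\otimes_A B\to B)\cong \Tor_1^A(A/I,B)$ via the long exact sequence (using $\Tor_1^A(A,B)=0$); then d\'evissage by induction on the number of generators, through an exact sequence $0\to M'\to M\to A/J\to 0$ with cyclic quotient; and finally passage to arbitrary modules by writing $M$ as a filtered colimit of its finitely generated submodules and using that $\Tor_1$ commutes with filtered colimits. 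All of these steps are sound. One remark: your preliminary reduction to finitely generated ideals is logically superfluous under the stated hypothesis, which already covers all ideals; but it is not wasted effort, since carried through it establishes the sharper form of the criterion (injectivity for finitely generated ideals alone suffices, equivalently $\Tor_1^A(A/I,M)=0$ for finitely generated $I$), which is precisely the version the paper later quotes from Matsumura as its proposition 2 and actually relies on in the proof of its main result.
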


Then it is necessary to test  a homomorphism $(x)
\otimes_{k[x]/(x^2)} k \to k$, for injectivity. This homomorphism
is induced by the inclusion of the ideal $(x) \hookrightarrow
k[x]/(x^2)$. The tensor product  $(x) \otimes_{k[x]/(x^2)} k$ is
nonzero and is a $k$-linear span of the element $x\otimes 1$,
$x^2=0$. The element $x\otimes 1$ is taken to 0 under the mapping
to $k$. Then the ring homomorphism of interest and the
corresponding scheme morphism are not flat.

The same result is given by theorem теоремой \ref{critf}. The
Hilbert polynomial of the fibre of the morphism $f$ over the
unique closed point  $t$ of the scheme  $T$ equals to $P_t(m)=1$.
The function  $\varpi$ if computed for 1st infinitesimal
neighborhood of the closed point on the base $T$ (it coincides
with the whole of the scheme $T$), equals to  $\varpi^{(1)}_t(m)
=1/2$. This differs from the value  $\varpi^{(0)}_t (m)=P_t(m)=1.$
\end{example}
\begin{example} Let $A= k[x]/(x^3)$, and $B=k[x]/(x^2)$ is
$A$-module of interest. It is clear that $B$ is finitely generated
(and has one generator) over $A$. $A$ is local $k$-algebra with
maximal ideal $(x)$ and residue field $k$. Since  $B$ is not free
as $A$-module that $B$ is not flat as  $A$-module. On the other
hand, the group $\Tor ^A_1(k,B)$ fits into the exact sequence
which is induced by tensoring of a triple
$$ 0\to (x^2) \to A \to B \to 0
$$ by $\otimes _A k$:
\begin{equation}0\to \Tor^A_1(k,B) \to (x^2)\otimes_A k \to
A\otimes _A k\to B\otimes _A k \to 0
\end{equation}
This sequence implies that $\Tor ^A_1(k,B)=(x^2) \otimes _A k= k$.
This also shows that  $B$ is not flat as $A$-module. Computing the
function $\varpi$ one has  $\varpi^{(0)}_t (m)=\varpi^{(1)}_t(m)
=1$, $\varpi^{(2)}_t (m)=2/3$.
\end{example}

\section{Algebraic version}\label{proof}
We will need the following criterion for flatness \cite[ch. 1,
theorem 7.8]{Mats}.
\begin{propos}\label{torcrit} $A$-module $M$ is flat if and only if
$\Tor ^A_1(A/I, M)=0$ for any finitely generated ideal $I\subset
A$.
\end{propos}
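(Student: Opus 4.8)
The plan is to prove the nontrivial implication, since the forward direction is immediate: if $M$ is flat then $\Tor^A_n(N,M)=0$ for all $n\geq 1$ and all $N$, in particular for $N=A/I$. So I would assume $\Tor^A_1(A/I,M)=0$ for every finitely generated ideal $I$ and aim to deduce flatness. First I would translate this hypothesis into the injectivity of a multiplication map. Applying $-\otimes_A M$ to the short exact sequence $0\to I\to A\to A/I\to 0$ and using $\Tor^A_1(A,M)=0$ (as $A$ is free) gives the exact sequence
$$
0\to \Tor^A_1(A/I,M)\to I\otimes_A M\to M,
$$
so that $\Tor^A_1(A/I,M)=\ker(I\otimes_A M\to M)$. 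Thus the hypothesis says exactly that $I\otimes_A M\to M$ is injective for every finitely generated ideal $I$.

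Next I would remove the finite-generation restriction by a filtered-colimit argument. Every ideal $I$ is the filtered union of its finitely generated subideals $I_\alpha$, whence $A/I=\varinjlim A/I_\alpha$; since $\Tor$ commutes with filtered colimits, $\Tor^A_1(A/I,M)=\varinjlim\Tor^A_1(A/I_\alpha,M)=0$. Hence $\Tor^A_1(A/I,M)=0$ for \emph{every} ideal $I$, that is, for every cyclic module.

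The core step is a dévissage from cyclic modules to arbitrary ones. For a finitely generated $N$ I would induct on the number of generators: writing $0\to N'\to N\to N''\to 0$ with $N'$ the submodule on one chosen generator (so $N'$ is cyclic) and $N''$ generated by one fewer element, the long exact $\Tor$ sequence
$$
\Tor^A_1(N',M)\to\Tor^A_1(N,M)\to\Tor^A_1(N'',M)
$$
has both outer terms zero (the left by the cyclic case just established, the right by the inductive hypothesis), forcing $\Tor^A_1(N,M)=0$. A second colimit argument, expressing an arbitrary module as the filtered colimit of its finitely generated submodules, then yields $\Tor^A_1(N,M)=0$ for all $N$. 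Finally, for any injection $N'\hookrightarrow N$ the long exact sequence shows $N'\otimes_A M\to N\otimes_A M$ is injective, which is precisely flatness of $M$.

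The hard part will be the passage from cyclic modules to all modules: the two filtered-colimit reductions depend on exactness of filtered colimits and their commutation with $\Tor$, and the dévissage hinges on choosing the short exact sequence that drops one generator. Everything else reduces to a routine application of the long exact sequence of $\Tor$.
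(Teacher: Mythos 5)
Your proposal is correct, but there is nothing in the paper to measure it against: the paper does not prove this proposition at all. It is quoted as a known result from Matsumura (reference \cite{Mats}, ch.~1, theorem 7.8) and used as a black box in the proof of Proposition~\ref{prcrmod}, where the vanishing of $\Tor_1^{A/I_n}(M/I_nM,k)$ is fed into it to conclude flatness of $M/I_nM$. So what you have written is a complete proof of a fact the paper simply cites, and it is essentially the standard textbook argument. All the steps check out: the identification $\Tor^A_1(A/I,M)=\ker(I\otimes_A M\to M)$ via the long exact sequence for $0\to I\to A\to A/I\to 0$; the passage from finitely generated ideals to arbitrary ideals by writing $I$ as the filtered union of its finitely generated subideals and using that $\Tor$ commutes with filtered colimits; the d\'evissage by generator count, which needs the cyclic case for \emph{arbitrary} ideals (a cyclic module is $A/\mathrm{Ann}(x)$ with no finiteness guarantee on the annihilator) --- you correctly placed the colimit step before the d\'evissage, which is the one ordering subtlety in this proof; and the final deduction that $N'\otimes_A M\to N\otimes_A M$ is injective for every inclusion $N'\hookrightarrow N$ from $\Tor^A_1(N/N',M)=0$. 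One could shorten the argument slightly by invoking the fact that flatness need only be tested on inclusions of finitely generated ideals into $A$ (the "ideal criterion"), but that fact is itself proved by exactly the reductions you carried out, so nothing is lost by doing them explicitly.
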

\begin{convention} Let $A$ be local Noetherian  $k$-algebra
with residue field $k=\overline k$, $I\subset A$ be an ideal such
that  $A/I$ is Artinian $k$-algebra of length  $n$, i.e. $\dim
A/I=n$. Then the ideal $I$ is said to be of colength $n$. This
fact  will be reflected in the notation of the ideal: we write
$I_n$ instead of $I$.
\end{convention}





 \begin{propos}\label{prcrmod} Let  $M$ be a finitely generated
module over the local Noetherian  $k$-algebra  $A$ with residue
field $k$. Module  $M$ is free if and only if for all $n>0$ and
for all ideals  $I_n\subset A$ of colength $n$
\begin{equation}\label{critmod}\frac{\dim (M\otimes_A A/I_n)}{n} =\dim (M\otimes_A
k).\end{equation}
\end{propos}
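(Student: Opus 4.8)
The plan is to compare $M$ with a free module through a minimal presentation and to reduce the nontrivial implication to Krull's intersection theorem. Throughout I set $r=\dim(M\otimes_A k)$, which by Nakayama's lemma is the minimal number of generators of $M$; note also that for every ideal $I_n$ of finite colength the quotient $A/I_n$ is a local Artinian $k$-algebra with residue field $k$, so that $\dim(M\otimes_A A/I_n)=\length_A(M/I_nM)$ and the displayed ratio is well defined.

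First I would dispose of the \emph{only if} direction. If $M$ is free then, being finitely generated over the local ring $A$, it is isomorphic to $A^r$ with $r=\dim(M\otimes_A k)$; tensoring gives $M\otimes_A A/I_n\cong (A/I_n)^r$, whence $\dim(M\otimes_A A/I_n)=rn$ and (\ref{critmod}) follows immediately.

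For the \emph{if} direction I would choose a minimal generating system and form the exact sequence $0\to K\to A^r\xrightarrow{\phi}M\to 0$ with $K=\ker\phi$; freeness of $M$ is equivalent to $K=0$. Tensoring with $A/I_n$ and using right exactness, one gets $M\otimes_A A/I_n\cong (A/I_n)^r/\overline K_n$, where $\overline K_n=(K+I_nA^r)/I_nA^r$ is the image of $K$ in $A^r/I_nA^r$; hence $\dim(M\otimes_A A/I_n)=rn-\dim\overline K_n$. The hypothesis (\ref{critmod}) forces $\dim\overline K_n=0$, i.e.\ $K\subseteq I_nA^r$, for \emph{every} ideal $I_n$ of finite colength. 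Applying this to the powers $\mathfrak m^N$ (which are $\mathfrak m$-primary, so of finite colength) and invoking Krull's intersection theorem $\bigcap_N\mathfrak m^N=0$ in the Noetherian local ring $A$, I would conclude $K\subseteq\bigcap_N\mathfrak m^N A^r=(\bigcap_N\mathfrak m^N)A^r=0$, so $M\cong A^r$ is free.

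The hard part will be the \emph{if} direction, and within it the two delicate points are: translating the numerical equality into the vanishing $\overline K_n=0$ (which relies on the right exactness of $-\otimes_A A/I_n$ and on $\dim=\length$ over the Artinian quotients), and ensuring that the powers of $\mathfrak m$ occur among the admissible ideals so that Krull's theorem applies. The \emph{only if} direction is a routine computation.
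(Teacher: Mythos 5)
Your proof is correct, and it takes a genuinely different route from the paper's. You work with a minimal free presentation $0\to K\to A^r\xrightarrow{\ \phi\ }M\to 0$, $r=\dim(M\otimes_A k)$: right-exactness of tensoring plus the dimension count turns the hypothesis (\ref{critmod}) into $K\subseteq I_nA^r$ for every admissible ideal, and applying this just to the powers ${\mathfrak m}^N$ (which do have finite colength, since $A$ is Noetherian with residue field $k$) and invoking Krull's intersection theorem gives $K\subseteq\bigcap_N{\mathfrak m}^NA^r=0$, so $M\cong A^r$. The paper instead goes through flatness: it first proves (Claim \ref{claim1}, by induction on filtrations of the Artinian quotients $A/I_n$ with successive quotients $k$) that the hypothesis forces $\Tor_1^{A/I_n}(M/I_nM,k)=0$, so each $M/I_nM$ is flat over $A/I_n$; it then passes to ${\mathfrak m}$-adic completions and uses the local criterion of flatness (Matsumura, Theorem 22.4(ii)) to conclude that $M$ itself is flat over $A$, hence free. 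Your argument is more elementary — no Tor, no completions — and it consumes the hypothesis only for the ideals ${\mathfrak m}^N$; combined with the trivial converse, it therefore also delivers the content of the paper's Proposition \ref{propinf} (that the criterion need only be checked on powers of the maximal ideal), which the paper proves separately by a cofiltration argument. What the paper's longer route buys is the intermediate flatness statements over the infinitesimal thickenings $A/I_n$, which tie in with its Tor-criterion (Proposition \ref{torcrit}) and the deformation-theoretic reading of the main theorems; your route proves the proposition itself with considerably less machinery and with no appeal to the filtration arguments for which the paper invokes algebraic closedness of $k$.
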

\begin{proof} Note that  $M/I_n M=M\otimes_A A/I_n $ and
$M/I_nM \otimes_{A/I_n} k=M\otimes _A A/I_n \otimes
_{A/I_n}k=M\otimes _A k.$ Analogously,  $$M/I_nM
\otimes_{A/I_n}A/I_{n-1}= M \otimes_A A/I_{n-1}=M/I_{n-1}M.$$
There is an exact triple of $A$-modules  (and of $A/I_n$-modules)
\begin{equation}\label{triple} 0\to {\mathfrak m}_n \to A/I_n \to k \to 0.
\end{equation}
For the maximal ideal  ${\mathfrak m}_n\subset A/I_n$ we have
$M\otimes_A{\mathfrak m}_n=M \otimes_A A/I_n \otimes _{A/I_n}
{\mathfrak m}_n=M/I_n M \otimes _{A/I_n} {\mathfrak m}_n$.
Tensoring of  (\ref{triple}) by $M/I_n M \otimes_{A/I_n}$ yields
in the exact sequence
\begin{equation}\label{tortail}0\to \Tor_1^{A/I_n}(M/I_nM,k) \to
M/I_n M \otimes_{A/I_n} {\mathfrak m}_n \to M/I_nM \to M/I_n M
\otimes_{A/I_n} k \to 0.\end{equation} Left exactness is
guaranteed by $\Tor_1^{A/I_n}(M/I_nM, A/I_n)=0$ because any ring
is flat over itself.
\begin{claim}\label{claim1} The equality  (\ref{critmod}) implies
exactness of a sequence
\begin{equation}\label{trin}0\to M/I_n M \otimes_{A/I_n}
{\mathfrak m}_n \to M/I_nM \to M/I_n M \otimes_{A/I_n} k \to
0.\end{equation}
\end{claim}
The proof of this claim will be presented below, when the proof of
the proposition will be completed. Claim \ref{claim1} and exact
sequence  (\ref{tortail}) imply that $\Tor_1^{A/I_n}(M/I_nM,k)=0.$
By  proposition \ref{torcrit} $M/I_nM$ is flat as  $A/I_n$-module.
Now we can consider not all possible ideals of finite colength but
powers of maximal ideal ${\mathfrak m} \subset A$ only. Passing to
${\mathfrak m}$-adic completions  $\widehat A$ and $\widehat M$ of
the ring $A$ and of the module $M$ respectively, we get
\cite[proof of theorem 22.4(ii)]{Mats} that  $\widehat M$ is flat
$\widehat A$-module.

By the same theorem \cite[theorem 22.4(ii)]{Mats} we conclude that
 $A$-module $M$ is flat.

The proof of the opposite implication is trivial. If the finitely
generated module over the local ring is flat then it is free, i.e.
$M \cong A^q$. This implies equalities of the form (\ref{critmod})
for all $n>0$ and for all ideals $I_n\subset A$. This completes
the proof of the proposition.
 \end{proof}

Now we prove claim \ref{claim1}. To organize induction over
 $n$ consider exact diagrams of the form
 \begin{equation}\label{descent} \xymatrix{&&0&0&\\
 &&k\ar[u] \ar@{=}[r]& k \ar[u]\\
 0\ar[r]&k\ar[r]& A/I_n \ar[u] \ar[r]& A/I_{n-1} \ar[u] \ar[r]&0\\
 0\ar[r]&k\ar@{=}[u] \ar[r]& {\mathfrak m}_n \ar[u] \ar[r]&
 {\mathfrak m}_{n-1} \ar[u] \ar[r]& 0\\
 && 0\ar[u]& 0\ar[u]}
 \end{equation}
Such a diagram of  $A$-modules  (and of  $k$-algebras) can be
built up for any $n>0$ and for any ideal  $I_n \subset A$ of
colength $n$. For $n=2$ we have ${\mathfrak m}_n={\mathfrak m}_2
\cong k$,
 ${\mathfrak m}_{n-1}={\mathfrak m}_1=0.$

Let  (\ref{critmod}) holds. Then  $$\dim M\otimes_A A/I_n=\dim
M\otimes _A A/I_{n-1}+ \dim M \otimes _A
 k.$$ This implies that the triple  $0\to M\otimes_A k \to M\otimes_A A/I_n \to M\otimes _A A/I_{n-1} \to
 0$ is exact.
 \begin{remark} Exactness of this triple  a priori
 does not imply that $$\Tor_1^A(M, A/I_{n-1})=\Tor_1^A(M, A/I_n)=\Tor_1^A(M,k)=0.$$
 This result follows from proposition
 \ref{prcrmod}.
 \end{remark}
Tensoring of the diagram (\ref{descent}) by $M \otimes_A$ leads to
the diagram
\begin{equation}\label{descM}\xymatrix{&&0&0\\
&&M\otimes_A k \ar[u] \ar@{=}[r]& M\otimes_A k \ar[u]\\
0 \ar[r]& M\otimes _A k \ar[r]& M\otimes_A A/I_n \ar[u] \ar[r]&
M\otimes_A A/I_{n-1} \ar[u] \ar [r]&0\\
& M\otimes_A k \ar@{=}[u] \ar[r]& M \otimes_A {\mathfrak m}_n
\ar[u] \ar[r]& M\otimes_A {\mathfrak m}_{n-1} \ar[u] \ar[r]&0}
\end{equation}
Let  $R:= \ker (M\otimes_A {\mathfrak m}_n \to M\otimes_A
{\mathfrak m}_{n-1}).$ Then the isomorphism $M\otimes_A k \cong
M\otimes_A k$ factors as  $M\otimes_A k \twoheadrightarrow R \to
M\otimes_A k$. This implies that  $R \cong M\otimes_A k,$ and
lower horizontal triple in (\ref{descM}) is left-exact.
Consequently, $\dim M\otimes_A {\mathfrak m}_n = \dim M \otimes_A
{\mathfrak m}_{n-1}+ \dim M\otimes_A k$. Applying induction over
 $n$ we have  $\dim M \otimes_A {\mathfrak m}_n=
(n-1)\dim M\otimes_A k$. This implies that the triple  $M\otimes
_A {\mathfrak m}_n \to M\otimes _A A/I_n \to M\otimes_A k \to 0$
(which is equivalent to the triple (\ref{trin})) is left-exact.
This proves the claim.

\begin{propos} \label{propinf} Equalities  (\ref{critmod}) hold
for all  $n>0$ and for all  $I_n \subset A$ if and only if the
analogous equalities
\begin{equation}\label{critmod1} \dim M\otimes_A A/{\mathfrak m}^n
= \dim A/{\mathfrak m}^n \,\dim M\otimes_A k,
\end{equation} hold for ${\mathfrak m}^n$ for all $n>0$.
\end{propos}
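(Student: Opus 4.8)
The plan is to route both families of equalities through the single property that $M$ is free over $A$, using Proposition \ref{prcrmod}, which already identifies the equalities (\ref{critmod}) for all $n$ and all $I_n$ with freeness of $M$. Thus it suffices to prove that the equalities (\ref{critmod1}) for the powers ${\mathfrak m}^n$ alone are likewise equivalent to freeness of $M$. One implication is immediate and does not even require the reduction to freeness: every power ${\mathfrak m}^n$ is an ideal of finite colength, namely of colength $c:=\dim A/{\mathfrak m}^n$, so specializing (\ref{critmod}) to the ideal $I={\mathfrak m}^n$ and clearing the denominator gives $\dim M\otimes_A A/{\mathfrak m}^n = c\,\dim M\otimes_A k$, which is exactly (\ref{critmod1}).

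For the reverse implication I would first show that (\ref{critmod1}) forces every truncation to be free. Put $q:=\dim M\otimes_A k=\dim M/{\mathfrak m}M$; by Nakayama's lemma $M$ is generated by $q$ elements, and the same elements generate the $A/{\mathfrak m}^n$-module $M/{\mathfrak m}^nM$, so there is a surjection $(A/{\mathfrak m}^n)^q\twoheadrightarrow M/{\mathfrak m}^nM$. Its source has $k$-dimension $q\,\dim A/{\mathfrak m}^n$, and by (\ref{critmod1}) its target has the same dimension; a surjection of finite-dimensional $k$-vector spaces of equal dimension is an isomorphism, hence $M/{\mathfrak m}^nM\cong (A/{\mathfrak m}^n)^q$ is free over $A/{\mathfrak m}^n$ for every $n$. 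This is precisely the input used at the completion step in the proof of Proposition \ref{prcrmod}: passing to ${\mathfrak m}$-adic completions one obtains that $\widehat M$ is flat over $\widehat A$, and then \cite[theorem 22.4(ii)]{Mats} yields that $M$ is flat, i.e. free. Finally, freeness $M\cong A^q$ returns (\ref{critmod}) for an arbitrary colength-$n$ ideal $I_n$ by the one-line computation $\dim M\otimes_A A/I_n=\dim(A/I_n)^q=qn$, so that the left-hand side of (\ref{critmod}) equals $q=\dim M\otimes_A k$.

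The only step I expect to require real care is the passage from the family of truncated freeness statements, $M/{\mathfrak m}^nM$ free over $A/{\mathfrak m}^n$ for all $n$, to genuine freeness of $M$ over $A$; this is where the Noetherian and local hypotheses are indispensable. Apart from the completion argument borrowed from Proposition \ref{prcrmod}, it can also be seen directly: the isomorphisms $(A/{\mathfrak m}^n)^q\cong M/{\mathfrak m}^nM$ force the kernel $K$ of a minimal presentation $A^q\twoheadrightarrow M$ to satisfy $K\subseteq {\mathfrak m}^nA^q$ for every $n$, whence $K\subseteq\bigcap_n{\mathfrak m}^nA^q=0$ by Krull's intersection theorem and $M\cong A^q$. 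Either route closes the equivalence.
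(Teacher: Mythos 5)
Your proposal is correct, but it follows a genuinely different route from the paper's own proof. The paper never passes through freeness of $M$: it proves the nontrivial implication by pure dimension counting on Artinian quotients. Concretely, it refines $A/{\mathfrak m}^n\twoheadrightarrow k$ into a cofiltration of length $(n)=\dim A/{\mathfrak m}^n$ with all successive kernels isomorphic to $k$, notes that right-exactness of $M\otimes_A-$ gives the subadditivity $\dim M\otimes_A A/I_{j+1}\le \dim M\otimes_A k+\dim M\otimes_A A/I_j$ at every step, and then uses the extremal equality (\ref{critmod1}) at the top of the chain to force every intermediate inequality to be an equality; since any ideal $I_l$ of finite colength contains some power ${\mathfrak m}^n$, it can be placed inside such a cofiltration, and the equality (\ref{critmod}) for $I_l$ falls out by summing the now-exact steps from the bottom. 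Your argument instead shows that the equalities (\ref{critmod1}) for the powers of ${\mathfrak m}$ alone already force $M$ to be free: Nakayama gives a surjection $(A/{\mathfrak m}^n)^q\twoheadrightarrow M/{\mathfrak m}^nM$ which (\ref{critmod1}) promotes to an isomorphism, and then either the completion step of Proposition \ref{prcrmod} (via \cite[theorem 22.4(ii)]{Mats}) or, more cleanly, Krull's intersection theorem applied to the kernel $K\subseteq\bigcap_n{\mathfrak m}^nA^q=0$ of a minimal presentation yields $M\cong A^q$, after which (\ref{critmod}) for an arbitrary $I_n$ is a one-line computation. Both arguments are sound; each step of yours checks out, including the correct handling of the colength of ${\mathfrak m}^n$ in the trivial direction. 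What your route buys is a structural strengthening: it shows that freeness is detected by the ideals ${\mathfrak m}^n$ alone, so Propositions \ref{prcrmod} and \ref{propinf} could be merged into a single criterion, and the Krull-intersection variant even avoids completion. What the paper's route buys is economy of means: it is self-contained linear algebra over the Artinian quotients, needing neither Nakayama, nor Krull, nor the local criterion of flatness, though it does lean on the existence of composition series with factors $k$ (the point where the paper invokes the hypothesis on the residue field).
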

\begin{proof} Part "only if"\, is obvious; it rests to prove part "if".
Denote by the symbol  $(n)$ the length of the quotient algebra
$A/{\mathfrak m}^n$, i.e. $(n):=\dim A/{\mathfrak m}^n$. For
descending induction on lengths of quotient algebras we write down
exact sequences of the form
\begin{eqnarray} 0\to k \to A/{\mathfrak m}^n
\to A/I_{(n)-1}\to 0, \nonumber \\
0\to k \to A/I_{(n)-1} \to A/I_{(n)-2} \to 0, \nonumber\\
\dots \dots \dots \dots \dots \dots \dots \dots \dots \dots \dots \dots \nonumber\\
0\to k \to A/I_{(n')+1} \to A/{\mathfrak m}^{n'} \to 0\nonumber
\end{eqnarray}
for appropriate  $n'<n$. Tensoring by $M\otimes_A$ and dimension
counting lead to the sequence of inequalities
\begin{eqnarray}\label{ineq} \dim M\otimes_A A/{\mathfrak m}^n \le \dim M
\otimes_A k + \dim M \otimes_A
A/I_{(n)-1}, \\
\dim M \otimes_A A/I_{(n)-1}\le \dim M \otimes_A k+\dim M
\otimes_A A/I_{(n)-2},\nonumber\\
\dots \dots \dots \dots \dots \dots \dots \dots \dots \dots
\dots \dots \dots \dots \dots \dots \dots \dots \nonumber\\
\dim A/I_{(n')+1}\le \dim M \otimes_A k+\dim A/{\mathfrak m}^{n'}.
\nonumber
\end{eqnarray}
Continuing descent till  $n'=1$ and applying (\ref{critmod1}) we
conclude that inequalities in  (\ref{ineq}) are indeed equalities.

Then for any $I_l, l>0$ there exist  $n>0$ such that $A/{\mathfrak
m}^n \twoheadrightarrow A/I_l$. In this case there is a
cofiltration $A/{\mathfrak m}^n \twoheadrightarrow
A/I_{(n)-1}\twoheadrightarrow \dots \twoheadrightarrow A/I_l
\twoheadrightarrow \dots \twoheadrightarrow k\twoheadrightarrow 0$
of length $(n)$ with kernels isomorphic to  $k$, and such that it
contains  $A/I_l$. Counting of dimensions of vector spaces
 $M\otimes_A A/I_j$, $j=1, \dots , l,$ and application of equalities
  (\ref{ineq}) yield in the required equality
$\dim M\otimes _A A/I_l =l \dim M\otimes_A k$.
\end{proof}

\section{Proof for coherent  $\OO_T$-module}

Since the assertion of the theorem is local in $T$ one can assume
that  $T=\Spec A$ for local Noetherian $k$-algebra  $A$. In
further text we will use the notation $\FF(m):= \FF
\otimes_{\OO_X} \LL^m$. The group  $H^0(\Spec A, f_{\ast} \FF(m))=
H^0(X, \FF(m))$ carries a structure of finitely generated
$A$-module. It is necessary to prove that this module is flat. For
this purpose consider a finite presentation of the quotient ring
$A/I_n$ (it exists because the ideal $I_n$ is finitely generated):
\begin{equation}\label{pres}
A^q \to A \to A/I_n \to 0. \end{equation} The quotient ring
 $A/I_n$ fixes a zero-dimensional subscheme  $Z\subset T$ of
 length  $n$. The presentation (\ref{pres}) induces the triple
 $$
\FF(m)^q \to \FF(m) \to \FF(m)\otimes_{\OO_X}f^{\ast} \OO_Z \to 0.
$$
Formation of groups of global sections leads to sequences
\begin{equation}\label{pres1}H^0(X,\FF(m))^q \to
H^0(X,\FF(m)) \to H^0(X,\FF(m)\otimes_{\OO_X}f^{\ast} \OO_Z )\to
0.
\end{equation}
Right-exactness is achieved here when $m\gg 0$. Tensoring of
presentation (\ref{pres}) by $H^0(X, \FF(m))\otimes_A$ leads to
the right-exact triple
\begin{equation}\label{pres2} H^0(X, \FF(m))^q \to H^0(X, \FF(m))
\to H^0(X, \FF(m))\otimes_A A/I_n \to 0.
\end{equation}
Comparison of  (\ref{pres1}) and (\ref{pres2}) yields in the
isomorphism
\begin{equation}\label{sect} H^0(X,\FF(m)\otimes_{\OO_X}f^{\ast} \OO_Z )=
H^0(f^{-1}Z,\FF(m)\otimes_{\OO_X}f^{\ast} \OO_Z )\cong H^0(X,
\FF(m))\otimes_A A/I_n.\end{equation}

We suppose that  \begin{equation}\label{dim}\dim
H^0(f^{-1}Z,\FF(m)\otimes_{\OO_X}f^{\ast} \OO_Z )=n \dim
H^0(f^{-1}t, \FF(m) \otimes_{\OO_X} f^{\ast}k_t),\end{equation}
where  $t$ is the unique closed point of scheme $T$. By
(\ref{sect}) we have
\begin{eqnarray} H^0(f^{-1}Z, \FF(m) \otimes_{\OO_X} f^{\ast}
\OO_Z) &\cong& H^0(X, \FF(m)) \otimes_A A/I_n, \nonumber
\\ H^0(f^{-1 }t,\FF(m)\otimes_{\OO_X} f^{\ast} k_t)&\cong & H^0(X,
\FF(m))\otimes_A k. \nonumber \end{eqnarray}

Substituting these isomorphisms into (\ref{dim}) we conclude that
for all  $n>0$ and for all  $I_n \subset A$ the following
equalities hold:
$$ \dim H^0(X, \FF(m))\otimes_A A/I_n=
n \,\dim H^0(X, \FF(m))\otimes_A k,
$$
This validates proposition \ref{prcrmod} for  $H^0(X, \FF(m))$.
Hence $H^0(X, \FF(m))$ is flat $A$-module.

The proof of flatness of  $\FF$ as  $\OO_T$-module copies the
proof of the implication (ii)$\Rightarrow$(i) in \cite[ch. III,
proof of theorem 9.9]{Hart} verbatim. This part of the cited proof
remains valid also for nonreduced scheme $T$. By projectivity of
the morphism $f$ we can restrict to the case when $f$ is a
structure morphism of some projective bundle $f: \Proj A[x_0:
\dots : x_n] \to \Spec A$ and consider graded $A[x_0: \dots : X_n
]$-module $M= \bigoplus_{m\ge m_0} H^0(X, \FF(m))$. The integer
$m_0$ is chosen as big as  $A$-modules $H^0(X, \FF(m))$ are free
for all $m\ge m_0.$ Then  $\FF=\widetilde M$, where \,
$\widetilde{}$\, denotes formation of a coherent sheaf of
$\OO_{\Spec A}$-modules which is associated with finitely
generated  $A$-module $M$ ("sheafification"). In this case  $M$
and $\bigoplus_{m\ge 0} H^0(X, \FF(m))$ are equal for all $m\ge
m_0$ and hence \cite[ch. II, proposition 5.15]{Hart} $\widetilde M
= (\bigoplus_{m\ge 0} H^0(X, \FF(m)))^{\sim}$. Since $M$ is free
(and, consequently, flat) $A$-module, then $\FF$ is flat over $A$
(and hence over $T=\Spec A$.)

The proof of opposite implication repeats the proof of implication
(i)$\Rightarrow $(ii) in \cite[ch. III, proof of theorem
 9.9]{Hart} verbatim. Let $\FF$ be a flat
$\OO_T$-module and we reduce our consideration to the case
$X=\Proj A[x_0: \dots : X_n]$, $T=\Spec A$, for Noetherian local
ring  $A$. Compute  $H^i(X, \FF(m))$ as \v{C}ech cohomology of
standard open affine covering $\mathfrak U$ of the space $X$.
Namely, $H^i(X, \FF(m))=H^i(C^{\bullet} ({\mathfrak U}, \FF(m))).$
Since the sheaf $\FF$ is flat, then for all  $i\ge 0$ the term
$C^i ({\mathfrak U}, \FF(m))$ is flat $A$-module. If $i>0$ then
for $m\gg 0$ we have $H^i(X, \FF(m))=0.$ Then  \v{C}ech complex
provides a right resolution for $A$-module $H^0(X, \FF(m)),$  and
the sequence
$$
0\to H^0(X, \FF(m)) \to C^0({\mathfrak U}, \FF(m)) \to \dots \to
C^n({\mathfrak U}, \FF(m)) \to 0
$$
is exact. Since all terms of \v{C}ech complex are flat
$A$-modules, then cutting this exact sequence into exact triples
we come to flatness of $A$-module $H^0(X, \FF(m))$. Then it is
subject of proposition \ref{prcrmod}, and for all $n>0$ and for
all  $I_n \subset A$ following equalities hold:
$$ \dim H^0(X, \FF(m))\otimes_A A/I_n=
n \,\dim H^0(X, \FF(m))\otimes_A k.
$$
By the isomorphism  (\ref{sect}) which was proven independently,
assertions of theorem  \ref{critF} are fulfilled.

{\bf Acknowledgements.} Author expresses her deep and sincere
gratitude to Prof. Dr. Vasile Brinzanescu (IMAR, Bucharest,
Romania) for drawing the attention to the question. Also the
author thanks the Institute of Mathematics of Romanian Academy
(IMAR), where the part of this work was done, for hospitality and
support.


\begin{thebibliography}{99}
\bibitem{Hart} { \sc R.~Hartshorne,} {\it
Algebraic geometry,}  Graduate Texts in Mathematics,  {\bf 52},
Springer-Verlag, New York -- Heidelberg -- Berlin, 1977.

\bibitem{Milne} {\sc J.S.~Milne,} {\it
\'{E}tale Cohomology,}  Princeton Math. Series, {\bf 33},
Princeton Univ. Press, Princeton, New Jersey, 1980.

\bibitem{Mats} {\sc H.~Matsumura,} {\it Commutative ring theory,}
 Cambridge Univ. Press, Cambridge,
1986.

\bibitem{AM} {\sc M.F.~Atiyah, frs,} and {\sc I.G.~Macdonald,} {\it Introduction
to commutative algebra,} Addison-Wesley Series in Mathematics,
 Addison-Wesley Publishing Co., Massachusetts,
1969.
\end{thebibliography}
\end{document}